\numberwithin{equation}{section}
\newtheorem{theorem}{Theorem}[section]
\newtheorem{proposition}[theorem]{Proposition}
\newtheorem{lemma}[theorem]{Lemma}
\newtheorem{remark}[theorem]{Remark}
\theoremstyle{definition}
\newtheorem{definition}[theorem]{Definition}
\newenvironment{rem}{\begin{remark}\rm}{\end{remark}}
\title{Showing distinctness of surface links by taking 2-dimensional braids}
\author{Inasa Nakamura}
\address{
Institute for Biology and Mathematics of Dynamical Cell Processes (iBMath), Interdisciplinary Center for Mathematical Sciences, Graduate School of Mathematical Sciences, The University of Tokyo\newline
3-8-1 Komaba, Tokyo 153-8914, Japan}
\email{inasa@ms.u-tokyo.ac.jp}
\subjclass[2010]{Primary 57Q45; Secondary 57Q35, 57M25}
\keywords{surface link; 2-dimensional braid; chart; Roseman move; triple linking}
\begin{document}
\maketitle

\begin{abstract}
For an oriented surface link $S$, 
we can take a satellite construction called a 2-dimensional braid over $S$, which is a surface link in the form of a covering over $S$. 
We demonstrate that 2-dimensional braids over surface links are useful for showing the distinctness of surface links. 
We investigate non-trivial examples of surface links with free abelian group of rank two, concluding that their link types are infinitely many. 
\end{abstract}

\section{Introduction}

A {\it surface link} is the image of a smooth embedding of a closed surface into Euclidean space $\mathbb{R}^4$.  Two surface links are {\it equivalent} if there is an orientation-preserving self-diffeomorphism of $\mathbb{R}^4$ carrying one to the other. 
In this paper, we assume that surface links are oriented. 
In \cite{N4}, we investigated a satellite construction called a 2-dimensional braid over an oriented surface link, and introduced its graphical presentation called an $m$-chart on a surface diagram. 
A 2-dimensional braid over a surface link $S$ is a surface link in the form of a covering over $S$, and can be regarded as an analog to a double of a classical link. 
One of expected applications of the notion of a 2-dimensional braid is that it will provide us with a method for showing the distinctness of surface links. 
The aim of this paper is to demonstrate such use for 2-dimensional braids.

\begin{sloppypar}
Our main theorem is as follows. Let $k$ be a positive integer. Let $\sigma_1, \sigma_2, \ldots, \sigma_k$ be the standard generators of the $(k+1)$-braid group. Let $X_k=\sigma_1^2 \sigma_2 \sigma_3 \cdots \sigma_k$ where $X_1=\sigma_1^2$, and let $\Delta$ be a $(k+1)$-braid with a positive half twist. 
Let $S_k=\mathcal{S}_{k+1}(X_k, \Delta^2)$, a $T^2$-link called a torus-covering $T^2$-link determined from $(k+1)$-braids $X_k$ and $\Delta^2$, and we take the first (respectively second) component of $S_k$ as the one determined from the first (respectively second) strand of $X_k$; see Section \ref{sec:5} for the construction. Here, a {\it $T^2$-link} is a surface link each of whose components is of genus one.   
\begin{theorem}\label{mainthm}
Abelian $T^2$-links of rank two, $S_k$ and $S_l$, are not equivalent for distinct positive integers $k$ and $l$. Thus, the link types of abelian $T^2$-links of rank two are infinitely many. 
\end{theorem}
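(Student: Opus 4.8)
The plan is to distinguish the $S_k$ not by invariants of the links themselves—which are too coarse, since every $S_k$ is an abelian $T^2$-link of rank two and so shares the same link group (free abelian of rank two) and the same homological data—but by numerical invariants of auxiliary surface links obtained from them. The organizing principle is the functoriality of the satellite construction of \cite{N4}: if $S_k$ and $S_l$ were equivalent, then for any fixed combinatorial $m$-chart $\Gamma$ the 2-dimensional braids over $S_k$ and over $S_l$ determined by $\Gamma$ would also be equivalent, because an orientation-preserving self-diffeomorphism of $\mathbb R^4$ carrying $S_k$ to $S_l$ can be isotoped to carry a tubular neighborhood onto a tubular neighborhood, and hence the covering surface decorated by $\Gamma$ onto the covering surface decorated by $\Gamma$. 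Thus it suffices to exhibit, for a single well-chosen chart, an invariant of the resulting cover taking distinct values on the two families; I would use the triple linking number for this.

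First I would recall from \cite{N4} the presentation of $S_k$ as a torus-covering link over the standard torus $T^2\subset\mathbb R^4$, the commuting braids $X_k$ and $\Delta^2$ serving as the monodromies along the two generators of $\pi_1(T^2)$: the first component arises from the fixed strand $1$ of the permutation of $X_k$ and the second from the $k$-cycle on strands $2,\dots,k+1$, while the central full twist $\Delta^2$ contributes the vertical twisting. Because a 2-dimensional braid over a torus-covering link is again a covering over $T^2$, the cover $\widetilde S_k$ specified by an $m$-chart $\Gamma$ is itself a torus-covering link $\widetilde S_k=\mathcal S_{m(k+1)}(A_k,B_k)$, where $A_k$ and $B_k$ are commuting braids assembled from $X_k$, $\Delta^2$, and the chart data in a prescribed way.

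Next I would choose $\Gamma$ so that $\widetilde S_k$ has exactly three components and a triple linking number that is both computable and sensitive to $k$. The natural choice is a chart that splits the second component of $S_k$ into two covering sheets while keeping the first component simple, so that the three resulting tori $F_0,F_1,F_2$ interact through the coupling part $\sigma_1^2$ of $X_k$ (which links strand $1$ to the cycle) together with the central twist $\Delta^2$. I would then compute $\mathrm{Tlk}(F_0,F_1,F_2)$ by the chart and braid calculus of \cite{N4}, reading the triple linking number as a signed count of the contributions of the generators of $A_k$ and $B_k$ distributed among the three component-classes; the coupling term forces this count to scale with the length $k$ of the cycle, yielding an explicit value $\mathrm{Tlk}(F_0,F_1,F_2)=f(k)$ that is strictly monotone in $k$. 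Since such $f$ is injective on positive integers, $\widetilde S_k$ and $\widetilde S_l$ have different triple linking numbers for $k\neq l$, hence are inequivalent, and by the functoriality above $S_k$ and $S_l$ are inequivalent as well, proving the theorem.

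The main obstacle is the middle step: producing a chart for which the triple linking number is simultaneously well-defined, invariant, and an injective function of $k$. Two points require care. First, the well-definedness and invariance of $\mathrm{Tlk}$ must be checked against the Roseman moves on the broken surface diagram of $\widetilde S_k$ and against the chart moves of \cite{N4}, so that $f(k)$ depends only on the equivalence class. Second, and more delicately, one must arrange the chart uniformly in $k$—placed relative to the two components in a way that the ambient equivalence of the first paragraph genuinely respects—so that comparing $f(k)$ with $f(l)$ is legitimate; this is precisely where the coupling $\sigma_1^2$ \emph{between} the two components, rather than any feature internal to a single component, must be shown to govern the invariant.
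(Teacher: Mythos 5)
Your overall strategy---pass to 2-dimensional braids over $S_k$ presented by charts, compute triple linking numbers of the resulting multi-component covers, and pull a contradiction back to $S_k$---is indeed the paper's strategy, and your expectation that the coupling $\sigma_1^2$ makes the invariant scale with $k$ matches the paper's key computation ($\mathrm{Tlk}_{1,2,3}(\widetilde{S_k})=-kp_1+q_1$ for a chart with intersection data $(2p_1,2q_1)$). But there is a genuine gap at the step you call functoriality, and it is not a technicality that can be arranged away: there is no such thing as ``the 2-dimensional braid over $S_l$ determined by the same chart $\Gamma$.'' A chart lives on the surface (or its diagram), and all an equivalence $f\colon(\mathbb{R}^4,S_k)\to(\mathbb{R}^4,S_l)$ gives you is that $\widetilde{S_k}$ (with chart $\Gamma$) is equivalent to the 2-dimensional braid over $S_l$ with chart $f(\Gamma)$. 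The homological data of $f(\Gamma)$ relative to the preferred basis of the components of $S_l$ is not $(2p,2q)$ but $A\begin{pmatrix}2p\\2q\end{pmatrix}$ for an unknown matrix $A\in GL_+(2,\mathbb{Z})$ recording the action of $f$ on $H_1$ of the torus component. Consequently the two numbers you want to compare are $-kp+q$ and $-lp'+q'$ with $\begin{pmatrix}p'\\q'\end{pmatrix}=A\begin{pmatrix}p\\q\end{pmatrix}$, and for any single choice of $(p,q)$ one can solve for an $A$ making these equal; worse, as $(p,q)$ ranges over $\mathbb{Z}^2$ the value $-kp+q$ ranges over all of $\mathbb{Z}$ for every $k$, so no single chart---and not even the set of all values over all charts---produces a number that is both well defined on equivalence classes and ``strictly monotone in $k$.'' This is exactly the obstacle you flag in your last paragraph and leave unresolved, and resolving it is the actual content of the proof.

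The paper closes the gap not by a uniform placement of one chart but by quantifying over all charts and converting the comparison into matrix equations: since $-kp+q=-lp'+q'$ must hold for all integers $p,q$, one obtains $\begin{pmatrix}k\\-1\end{pmatrix}=A^T\begin{pmatrix}l\\-1\end{pmatrix}$; then a second family of charts, supported on the other component $F_2$ and pulled back from the standard torus, yields---after a further reduction involving how the image of $S_k$ sits as a cover of a possibly non-standard torus (three cases, and two more unknown matrices $B$ and $C$)---a second equation $\begin{pmatrix}k\\-1\end{pmatrix}=(CBA)^T\begin{pmatrix}1\\-1\end{pmatrix}$; combining the two rules out all cases when $k>l>0$. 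Finally, a separate run of the same argument with the sign relation $\mathrm{Tlk}_{4,3,2}=-\mathrm{Tlk}_{2,3,4}$ is needed to exclude an equivalence that swaps the two components of $S_k$ with those of $S_l$, a case your sketch does not address at all. So your plan points in the right direction, but as written it would not go through: the invariant you propose is not well defined on equivalence classes, and the component-swapping case is missing.
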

An {\it abelian surface link} of rank $n$ is a surface link whose link group is a free abelian group of rank $n$ \cite{I-N}; note that $n$ is the number of the components. 
We remark that our abelian $T^2$-links of rank two cannot be distinguished by using link groups, and that by a homological argument we cannot show that their link types are infinitely many, but only that there are two link types; see Section \ref{sec:5-3}.
Our abelian $T^2$-link $S_k$ of rank two is a sublink of the surface link given in \cite{I-N}, where we gave examples of abelian $T^2$-links of rank four, and we showed that their link types are infinitely many by calculations of triple linking numbers (see also Remark \ref{rem0214}). Triple linking numbers are integer-valued invariants of surface links with at least three components; so we cannot use them for our case without a device. 
In order to overcome this situation, we take a 2-dimensional braid over $S_k$ such that each component of $S_k$ is split into two components. Then it has four components, and we can calculate triple linking numbers. A 2-dimensional braid over a surface link is obtained from the \lq\lq standard" 2-dimensional braid by addition of braiding information. Unfortunately, if we consider the standard 2-dimensional braid, then the triple linking is trivial (Proposition \ref{prop:tlk}). However, addition of braiding information makes a 2-dimensional braid with non-trivial triple linking, and enables us to show that $S_k$ and $S_l$ are not equivalent for distinct positive integers $k$ and $l$. As a similar result, we refer to Suciu's paper \cite{Suciu} where it is shown that there are infinitely many ribbon 2-knots in $S^4$ with knot group the trefoil knot group. 
\end{sloppypar}

The paper is organized as follows. In Section \ref{sec:5}, we review torus-covering links and explain our example $S_k$, and we review triple linking numbers of torus-covering links. In Section \ref{sec:2}, we review the notion of a 2-dimensional braid over a surface link. 
In Section \ref{sec:3}, we review that a 2-dimensional braid of degree $m$ over a surface link is presented by a finite graph called an $m$-chart on a surface diagram, and that 2-dimensional braids of degree $m$ are equivalent if their surface diagrams with $m$-charts are related by local moves called Roseman moves. 
In Section \ref{sec:4}, we show Proposition \ref{prop:tlk}. 
In Section \ref{sec:6}, we calculate triple linking numbers of a certain 2-dimensional braid over $S_k$ and prove Theorem \ref{mainthm}.

\section{Abelian $T^2$-links of rank two}\label{sec:5}
Our example $S_k$ given in Theorem \ref{mainthm} is a surface link called a torus-covering link. 
In this section, we review torus-covering $T^2$-links; see \cite{N1} for details. 
We briefly observe that $S_k$ is an abelian surface link of rank two, and that we cannot show that the link types of our examples are infinitely many by using a homological argument. 
Further, we review a formula for the triple linking numbers of torus-covering links \cite{I-N}.

\subsection{Torus-covering links}\label{sec:5-1}

Let $T$ be a standard torus in $\mathbb{R}^4$, the boundary of an unknotted (standardly embedded) solid torus in $\mathbb{R}^3 \times \{0\} \subset \mathbb{R}^4$. 

\begin{definition} \label{Def2-1} 
A {\it torus-covering $T^2$-link} $S$ is a surface link in the form of a 2-dimensional braid over the standard torus $T$, i.e. $S$ is a 
$T^{2}$-link in $\mathbb{R}^4$ 
such that $S$ is contained in a tubular neighborhood $N(T)$ and $\pi |_{S} \,:\, S \to T$ is an unbranched covering map, where $\pi \,:\, N(T) \to T$ is the natural projection. 
\end{definition} 

Let $S$ be a torus-covering $T^{2}$-link. Fix a base point $x_0 =(x'_{0},x''_{0})$ of $T = S^{1} \times S^{1}$. Take two simple closed curves on $T$, $\mathbf{m} = \partial B^{2} \times \{x''_0\}$ and $\mathbf{l}= \{x'_{0}\}\times S^{1}$. Recall that $T$ is embedded as $T=\partial (B^{2} \times S^{1}) \subset \mathbb{R}^{3} \times \{0\} \subset \mathbb{R}^{4}$. Let us consider the intersections $S \cap \pi^{-1}(\mathbf{m}) \subset B^2 \times \mathbf{m}$ and $S \cap \pi^{-1}(\mathbf{l}) \subset B^2 \times \mathbf{l}$. They are regarded as closed $m$-braids in the 3-dimensional solid tori, where $m$ is the degree of the covering map $\pi |_{S} \, :\, S  \rightarrow T$. Cutting open the solid tori along the 2-disk $\pi^{-1}(x_0)= B^2 \times \{x_{0}\}$, we obtain two $m$-braids $a$ and $b$.
The assumption that $\pi|_{S}$ is an unbranched covering implies that $a$ and $b$ commute. We call the commutative braids $(a, b)$ the {\it basis braids} of $S$. Conversely, starting from a pair of commutative $m$-braids $(a, b)$, we can uniquely construct a torus-covering $T^2$-link with basis braids $(a, b)$ \cite[Lemma 2.8]{N1}.
For commutative $m$-braids $a$ and $b$, we denote by $\mathcal{S}_{m}(a,b)$ the torus-covering $T^2$-link with basis braids $(a, b)$.

\subsection{Our abelian $T^2$-links of rank two}\label{sec:5-3}
We can check that our example $S_k=\mathcal{S}_{k+1}(X_k, \Delta^2)$ is an abelian surface link, as follows. 
The link group of a torus-covering link $\mathcal{S}_m(a,b)$ is a quotient group of the classical link group of the closure of $a$  such that the abelianization is a free abelian group \cite[Proposition 3.1]{N1}. Since the the link group of the closure of $X_k$, a Hopf link, is a free abelian group of rank two, so is the link group of $S_k$.

We remark that by a homological argument we cannot show that our examples are infinitely many, but only that there are two link types. Let us consider the one-point compactification of $\mathbb{R}^4$, and regard that $S_k$ is in the Euclidean 4-sphere $S^4$. Recall that we take the first (respectively second) component of $S_k$ as the one determined from the first (respectively second) strand of $X_k$, and let us denote by $F_1$ (respectively $F_2$) the first (respectively second) component of $S_k$. 
Then, by Alexander's duality, we see that $H_2(S^4-F_1; \mathbb{Z})\cong H_1(F_1; \mathbb{Z})$, hence $[F_2]=\mu+k\lambda\in H_2(S^4-F_1; \mathbb{Z})$, where $(\mu, \lambda)$ is a preferred basis of $H_1(F_1; \mathbb{Z}) \cong H_2(S^4-F_1; \mathbb{Z})$ represented by a meridian and a preferred longitude of $F_1$. 
Similarly, let us denote by $F_1^\prime$ (respectively $F_2^\prime$) the first (respectively second) component of $S_l$. Then we can see that $[F_2^\prime]=\mu^\prime+l\lambda^\prime\in H_2(S^4-F_1^\prime; \mathbb{Z})$, where $(\mu^\prime, \lambda^\prime)$ is a preferred basis of $H_1(F_1^\prime; \mathbb{Z}) \cong H_2(S^4-F_1^\prime; \mathbb{Z})$  represented by a meridian and a preferred longitude of $F_1^\prime$. 
Now, standardly embedded tori $F_1$ and $F_1^\prime$ are related by an orientation-preserving self-diffeomorphism of $S^4$ if and only if $\begin{pmatrix} \mu^\prime \\ \lambda^\prime \end{pmatrix}=A\begin{pmatrix} \mu \\ \lambda \end{pmatrix}$ for $A=\begin{pmatrix} \alpha & \beta\\
\delta & \gamma \end{pmatrix} \in GL_+(2; \mathbb{Z})$ such that $\alpha+\beta+\gamma+\delta\equiv 0 \pmod{2}$ \cite{Montesinos}, which implies that $[F_2]=[F_2^\prime] \in  H_2(S^4-F_1; \mathbb{Z})$ if and only if $k\equiv l \pmod{2}$ (see \cite{Iwase}). 

\begin{rem}\label{rem0211}
The abelian surface link $S_1$, i.e. $\mathcal{S}_2(\sigma_1^2, \sigma_1^2)$, is the twisted Hopf 2-link we will mention in the proof of Proposition \ref{prop:tlk}; see also \cite{CKSS01}. 
\end{rem}

\begin{rem}\label{rem0214}
It is known \cite[Theorem 6.3.1--Exercise 6.3.3]{Kawauchi} that for classical links, the rank of an abelian link is at most two, and, for abelian links of rank two, there are exactly two link types; a positive Hopf link and a negative Hopf link. 
\end{rem}

\begin{rem}
Put $T_m=\mathcal{S}_{k+1}(X_k, X_k^m)$ for an integer $m$. It is known (\cite{Boyle}, see also \cite{Iwase, N1}) that $T_m$ and $T_n$ are equivalent for $m \equiv n \pmod{2}$. Fix the first component of $T_m$ in the form of the standard torus. By a homological argument as in this section, we see that $T_{m}$ cannot be taken to $T_{n}$ for $n \neq m$ by an orientation-preserving self-diffeomorphism of $\mathbb{R}^4$ relative to the first component.
\end{rem}

\subsection{Triple linking numbers of torus-covering links}\label{sec:5-2}

The triple linking number of a surface link $S$ is defined as follows \cite[Definition 9.1]{CJKLS}. 
For the $i$th, $j$th, and $k$th components $F_i, F_j,F_k$ of $S$ with $i \neq j$ and $j \neq k$, the {\it triple linking number} $\mathrm{Tlk}_{i,j,k}(S)$ of the $i$th, $j$th, and $k$th components of $S$ is the total number of positive triple points minus the total number of negative triple points of a surface diagram of $S$ such that the top, middle, and bottom sheet are from $F_i, F_j$, and $F_k$, respectively. Triple linking number is a link bordism invariant \cite{CKS, CKSS01,San, San2}; for other properties, see \cite{CJKLS, CKS}. Triple linking numbers are useful for showing the distinctness of surface links with at least three components \cite{I-N, N2, N3}.

By \cite{I-N}, we have a formula for the triple linking numbers of a torus-covering $T^2$-link $\mathcal{S}_{m}(a,b)$.

We use the notations given in \cite{I-N}. 
For a torus-covering $T^2$-link $\mathcal{S}_{m}(a,b)$, let ${A}_{i}$ be the components of the closure of $a$ which are from the $i$th component of $\mathcal{S}_{m}(a,b)$. Take one of the connected components of ${A}_{i}$ and denote it by $A^1_{i}$. 
We define $\mathrm{lk}^{a}_{i,j}$ by the classical linking number
\[ \mathrm{lk}_{i,j}^{a} = \mathrm{lk} ( A_{i}^1, {A}_{j}), \]
where we regard ${A}_{i}^1$ and $A_j$ as oriented links in $\mathbb{R}^{3}$. The notation $\mathrm{lk}^{b}_{i,j}$ for the other basis braid is defined similarly.
Note that $\mathrm{lk}^{a}_{i,j}$ does not depend on a choice of a connected component $A_{i}^1$ \cite[Remark 5.5]{I-N}, and note that $\mathrm{lk}^{a}_{i,j}$ is not always symmetric, i.e. $\mathrm{lk}^{a}_{i,j}$ is not always equal to $\mathrm{lk}^{a}_{j,i}$.

For a torus-covering $T^{2}$-link, the triple linking number of the $i$th, $j$th and $k$th components is given by 
\begin{equation}\label{eq:tlk}
 \mathrm{Tlk}_{i,j,k}(\mathcal{S}_{m}(a,b)) = -\mathrm{lk}^{a}_{j,i}\mathrm{lk}^{b}_{j,k} + \mathrm{lk}^{a}_{j,k}\mathrm{lk}^{b}_{j,i}, 
\end{equation}
where $i \neq k$ and $j \neq k$ \cite[Theorem 5.4 and Remark 5.7]{I-N}. 

\section{Two-dimensional braids over a surface link}\label{sec:2}
A 2-dimensional braid, which is also called a simple braided surface, over a 2-disk, is an analogous notion of a classical braid \cite{Kamada92,Kamada02,Rudolph}. 
We can modify this notion to a 2-dimensional braid over a closed surface \cite{N1}, and further to a 2-dimensional braid over a surface link \cite[Section 2.4.2]{CKS}, \cite{N4}. 

In this section, we review the notion of a 2-dimensional braid over a surface link \cite{N4}.

\subsection{Two-dimensional braids over a surface link}
We use 2-dimensional braids without branch points over a closed surface, so our definition here is restricted to such surfaces; see \cite{N1,N4} for the definition which allows branch points.

Let $\Sigma$ be a closed surface, let $B^2$ be a 2-disk, and let $m$ be a positive integer. 
\begin{definition}
A closed surface $\widetilde{\Sigma}$ embedded in $B^2 \times \Sigma$ is called a {\it 2-dimensional braid over $\Sigma$ of degree $m$} if  
the restriction $\pi |_{\widetilde{\Sigma}} \,:\, \widetilde{\Sigma} \rightarrow \Sigma$ is an unbranched covering map of degree $m$, where $\pi \,:\, B^2 \times \Sigma \to \Sigma$ is the natural projection. 

Take a base point $x_0$ of $\Sigma$. 
Two 2-dimensional braids over $\Sigma$ of degree $m$ are {\it equivalent} if there is a fiber-preserving ambient isotopy of $B^2 \times \Sigma$ rel $\pi^{-1}(x_0)$ which carries one to the other. 

\end{definition}

A surface link is said to be {\it of type $\Sigma$} when it is the image of an embedding of $\Sigma$. 
Let $S$ be a surface link of type $\Sigma$, and let $N(S)$ be a tubular neighborhood of $S$ in $\mathbb{R}^4$. 

\begin{definition}\label{def:2-braid}
A {\it 2-dimensional braid} $\widetilde{S}$ {\it over $S$} is the image of a 2-dimensional braid over $\Sigma$ in $B^2 \times \Sigma$ by an embedding $B^2 \times \Sigma \to \mathbb{R}^4$ which identifies $N(S)$ with $B^2 \times \Sigma$ as a $B^2$-bundle over a surface.
We define the {\it degree} of $\widetilde{S}$ as that of $S$.

Two 2-dimensional braids $\widetilde{S}$ and $\widetilde{S^\prime}$ over surface links $S$ and $S^\prime$ are {\it equivalent} if there is an ambient isotopy of $\mathbb{R}^4$ carrying $\widetilde{S}$ to $\widetilde{S^\prime}$ and $N(S)=B^2 \times S$ to $N(S^\prime)=B^2 \times S^\prime$ as a $B^2$-bundle over a surface. 

\end{definition}
 
% Two surface links are {\it equivalent} if there is an orientation-preserving self-diffeomorphism of $\mathbb{R}^4$ carrying one to the other. 
Equivalent 2-dimensional braids over surface links are also equivalent as surface links. 
A 2-dimensional braid $\widetilde{S}$ over $S$ is a specific satellite with companion $S$; see \cite[Section 2.4.2]{CKS}, see also \cite[Chapter 1]{Lickorish}.

\subsection{Standard 2-dimensional braids}\label{sec:2-2}
In this section, we define the standard 2-dimensional braid over a surface link $S$. Using this notion, we will explain in the next section that a 2-dimensional braid is presented by a finite graph called an $m$-chart on a surface diagram $D$ of $S$. The standard 2-dimensional braid over $S$ is the 2-dimensional braid presented by an empty $m$-chart on $D$ \cite{N4}.

First we will review a surface diagram of a surface link $S$; see \cite{CKS}. For a projection $p \,:\, \mathbb{R}^4 \to \mathbb{R}^3$, the closure of the self-intersection set of $p(S)$ is called the singularity set. Let $p$ be a generic projection, i.e. the singularity set of the image $p(S)$ consists of double points, isolated triple points, and isolated branch points; see Figure \ref{0215-1}. The closure of the singularity set forms a union of immersed arcs and loops, called double point curves. Triple points (respectively branch points) form the intersection points (respectively end points) of the double point curves. A {\it surface diagram} of $S$ is the image $p(S)$ equipped with over/under information along each double point curve with respect to the projection direction. 
 
\begin{figure}
\begin{center}
 \includegraphics*{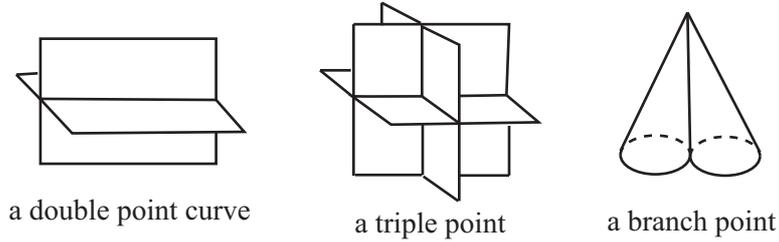}
\end{center}
  \caption{The singularity of a surface diagram.}
  \label{0215-1}
  \end{figure}

We define the $2m$-braid $\widetilde{\sigma_1}$ obtained from a $2$-braid $\sigma_1$, as follows. For the proof of Theorem \ref{mainthm}, here we define the $mn$-braid $\widetilde{b}$ obtained from an $n$-braid $b$. 
Let $Q_m$ be $m$ interior points of $B^2$. 
For a standard generator $\sigma_i$ of an $n$-braid, let $\widetilde{\sigma_i}$ be the $mn$-braid obtained from $\sigma_i$ in such a way that it is in the form of a $Q_m$-bundle over $\sigma_i$ and it is obtained from $\sigma_i$ by splitting each strand into a bundle of $m$ parallel strands with a negative half twist at the initial points of each bundle; see Figure \ref{fig2014-0210-01}. The map taking $\sigma_i$ to $\widetilde{\sigma_i}$ determines a homomorphism from the $n$-braid group to the $mn$-braid group. For an $n$-braid $b$, let $\widetilde{b}$ denote the image of $b$ by this homomorphism. 

\begin{figure}
\begin{center}
 \includegraphics*{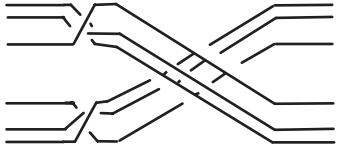}
\end{center}
  \caption{The $2m$-braid $\widetilde{\sigma_1}$.}
  \label{fig2014-0210-01}
  \end{figure}

\begin{definition}
Let $S$ be a surface link. 
A surface diagram $D$ of $S$ consists of the following local parts: around (1) a regular point i.e. a nonsingular point, (2) a double point curve, (3) a triple point, and (4) a branch point. The case (1) is presented by an embedded 2-disk $B^2$ with no singularity, and the case (2) is presented as the product of a $2$-braid $\sigma_1$ and an interval $I$. 
 
We define the {\it standard 2-dimensional braid over $S$} locally for such local parts of $D$ as follows: for (1), it is $m$ parallel copies of $B^2$, and for (2), it is the product of the $2m$-braid $\widetilde{\sigma_1}$ and $I$. Then, for the other cases (3) and (4), the standard 2-dimensional braid is naturally defined \cite[Definition 5.1 and Proposition 5.2]{N4}. 
\end{definition}

\section{Chart presentation of 2-dimensional braids \newline
and Roseman moves}\label{sec:3}
In this section, we review the following. A 2-dimensional braid of degree $m$ over a surface link $S$ is presented by a finite graph called an $m$-chart on a surface diagram $D$ of $S$ \cite{N4}. For two 2-dimensional braids of degree $m$, they are equivalent if their surface diagrams with $m$-charts are related by a finite sequence of local moves called Roseman moves \cite{N4}. 

\subsection{Chart presentation of 2-dimensional braids over a surface link} \label{sec:3-1}
 The graphical method called an $m$-chart on a 2-disk was introduced to present a simple surface braid which is a 2-dimensional braid over a 2-disk with trivial boundary condition \cite{Kamada92, Kamada02}. By regarding an $m$-chart on a 2-disk as drawn on a 2-sphere $S^2$, it presents a 2-dimensional braid over $S^2$ \cite{Kamada92, Kamada02, N1}. 
This notion can be modified to an $m$-chart on a closed surface \cite{N1}, and further to an $m$-chart on a surface diagram $D$ of a surface link $S$ \cite{N4}. A 2-dimensional braid over $S$ is presented by an $m$-chart on $D$ \cite{N4}. 
\\

In this paper, we treat $2$-charts with vertices of degree $2$. We will just review the graphical form of an $m$-chart of a 2-dimensional braid over a surface link. See \cite{N4} for details.

Let $\widetilde{S}$ be a 2-dimensional braid over a surface link $S$. Let $D$ be a surface diagram of $S$ by a projection $p:\mathbb{R}^4 \to \mathbb{R}^3$ which is generic with respect to both $S$ and $\widetilde{S}$. 
We can assume that the singularity set of the surface diagram 
$p(\widetilde{S})$ is the union of the singularity set of the diagram of the standard 2-dimensional braid over $S$ and some finite graph $\Gamma$ \cite[Theorem 5.5]{N4}. 
Project $\Gamma$ to $D$ by the projection $p(N(S))=B^2 \times D \to D$. Then we obtain a finite graph on the surface diagram $D$. An $m$-chart on a surface diagram $D$ is such a finite graph equipped with certain additional information of orientations and labels assigned to the edges, where $m$ is the degree of the 2-dimensional braid. Owing to the additional information, we can regain the original 2-dimensional braid from the $m$-chart on $D$ \cite{N4} (see also \cite{Kamada02}).

 We can define an $m$-chart on $D$ in graphical terms, where the labels of edges are from $1$ to $m-1$; see \cite[Definitions 5.3 and 5.4]{N4}. 
Around a double point curve, an $m$-chart is as in Figure \ref{fig:0417-01}, with a vertex of degree $2$. 
A 2-dimensional braid over $S$ is presented by an $m$-chart on $D$ \cite[Theorem 5.5]{N4}. 

 \begin{figure}[ht]
 \centering\includegraphics*{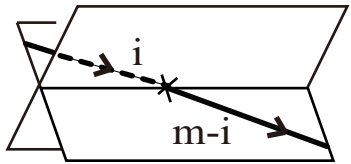}
\caption{An $m$-chart around a double point curve, where $i \in \{1,\ldots,m-1\}$. For simplicity, we omit the over/under information of each sheet.}
\label{fig:0417-01}
 \end{figure}

\subsection{Roseman moves}
Roseman moves are local moves of surface diagrams as illustrated in Figure \ref{fig:0417-03}. 
It is known \cite{Roseman} that two surface links are equivalent if and only if their surface diagrams are related by a finite sequence of Roseman moves and ambient isotopies of the diagrams in $\mathbb{R}^3$.
In \cite{N4}, we introduced the notion of Roseman moves for surface diagrams with $m$-charts. 

An $m$-chart is said to be {\it empty} if it is an empty graph. 
\begin{figure}
\centering
\includegraphics*{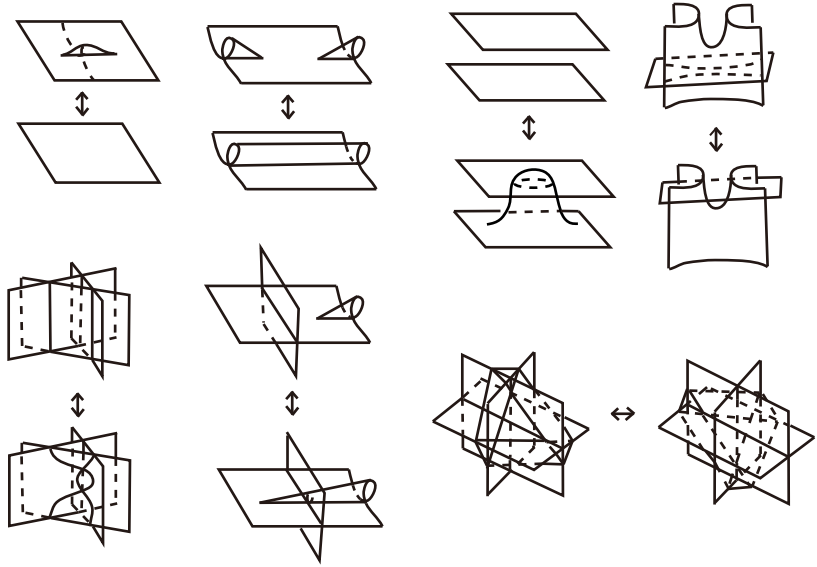}
\caption{Roseman moves. For simplicity, we omit the over/under information of each sheet. }
\label{fig:0417-03}
 \end{figure}%%

\begin{definition}
We define {\it Roseman moves for surface diagrams with $m$-charts} by the local moves as illustrated in Figures \ref{fig:0417-03} and \ref{fig:0417-04}, where we regard the diagrams in Figure \ref{fig:0417-03} as equipped with empty $m$-charts. 
\end{definition}

\begin{figure}
 \includegraphics*{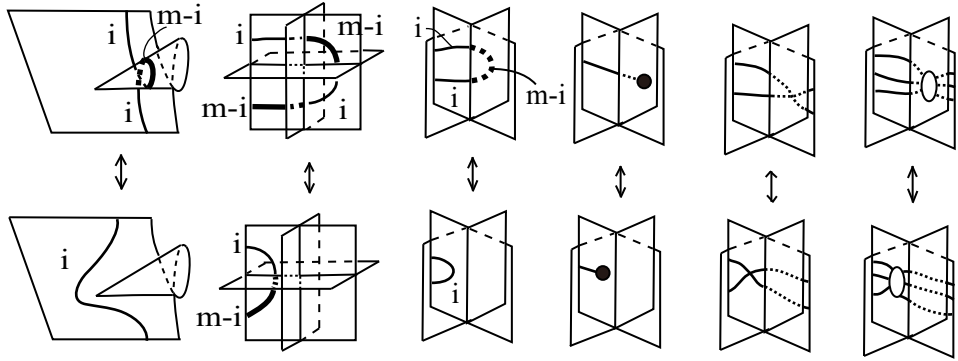}
\caption{Roseman moves for surface diagrams with $m$-charts, where $i \in \{1,\ldots,m-1\}$. For simplicity, we omit the over/under information of each sheet, and orientations and labels of edges of $m$-charts. }
\label{fig:0417-04}
 \end{figure}%
 
Roseman moves for surface diagrams with $m$-charts as illustrated in Figures \ref{fig:0417-03} and \ref{fig:0417-04} are well-defined, i.e. for each pair of Roseman moves, the $m$-charts on the indicated diagrams present equivalent 2-dimensional braids \cite[Theorem 6.2]{N4}.

\section{Triple linking numbers of standard 2-dimensional braids}\label{sec:4}

Recall the triple linking numbers (see Section \ref{sec:5-2}). 
We will say that a surface link $S$ has {\it trivial} triple linking if every triple linking number of $S$ is zero or $S$ consists of less than three components. 

\begin{proposition}\label{prop:tlk}
For the standard 2-dimensional braid $\tilde{S}$ over a surface link $S$, if $S$ has trivial triple linking, then so does $\tilde{S}$. 
\end{proposition}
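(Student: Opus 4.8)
The plan is to fix a generic surface diagram $D$ of $S$ and realize $\widetilde S$ as the standard $2$-dimensional braid presented by the empty $m$-chart on $D$, with the projection $p$ chosen generic for both $S$ and $\widetilde S$. The first step is to localize the triple points of $\widetilde S$. Over a regular region the standard braid is $m$ parallel nonsingular sheets, and over a double point curve it is $\widetilde{\sigma_1}\times I$, whose singular set is a union of double point curves (coming from the braiding crossings) and contains no triple points. Hence every triple point of $\widetilde S$ lies in a small neighborhood of a triple point of $S$, and it suffices to analyze, triple point by triple point of $S$, the triple points of $\widetilde S$ produced by the local model of \cite[Definition 5.1 and Proposition 5.2]{N4}.

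Second, at a triple point $t$ of $S$ whose top, middle and bottom sheets lie in components $F_I, F_J, F_K$ of $S$, each sheet is replaced by $m$ coherently oriented parallel copies, so over $t$ one obtains a family of triple points of $\widetilde S$, each inheriting the co-orientation data of $t$ and therefore the sign $\mathrm{sign}(t)$. Since each component of $\widetilde S$ covers a single component of $S$ with a well-defined degree, writing $d^I_a$ for the degree over $F_I$ of the component $a$ of $\widetilde S$, the number of these triple points whose top, middle and bottom sheets belong to prescribed components $a,b,c$ factors as $d^I_a\,d^J_b\,d^K_c$. Summing over all triple points of $S$ of a fixed type $(I,J,K)$ then yields
\[
\mathrm{Tlk}_{a,b,c}(\widetilde S)=d^I_a\,d^J_b\,d^K_c\, N_{I,J,K}(D),
\]
where $a,b,c$ cover $F_I,F_J,F_K$ respectively and $N_{I,J,K}(D)$ denotes the signed count of triple points of $D$ of type $(I,J,K)$.

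Third, I would split into cases according to the underlying components. When $I\neq J$ and $J\neq K$, the count $N_{I,J,K}(D)$ is exactly the triple linking number $\mathrm{Tlk}_{I,J,K}(S)$, which vanishes by the hypothesis that $S$ has trivial triple linking; thus every triple linking number of $\widetilde S$ whose three components cover three components of $S$ in a configuration allowed by the definition is zero. This disposes of all ``honest'' types at once.

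The remaining case, in which two of $a,b,c$ cover the same component of $S$ (say $a$ and $b$ cover $F_I$, forcing a self-type $N_{I,I,K}$, $N_{I,K,I}$ or $N_{K,I,I}$), is precisely the situation of the application, where each component of $S_k$ is doubled, and I expect it to be the main obstacle. Here $N_{I,I,K}(D)$ is not in itself an invariant of $S$ and need not vanish for an arbitrary diagram, so one cannot simply invoke $\mathrm{Tlk}(S)=0$. My plan is to combine two facts. First, the self-double-point curve $\delta_I$ of $F_I$ is a closed $1$-cycle and $[F_K]=0$ in $H_2(\mathbb{R}^3;\mathbb{Z})$, so the total algebraic intersection $\delta_I\cdot F_K$, which records all three height positions of $F_K$ along $\delta_I$ and hence governs $N_{K,I,I}(D)+N_{I,K,I}(D)+N_{I,I,K}(D)$, vanishes. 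Second, since $\mathrm{Tlk}_{a,b,c}(\widetilde S)$ is an invariant I am free to compute it from any diagram, and I would isotope $F_K$ into a normal form near $\delta_I$ in which it lies entirely below (or entirely above) the two sheets of $F_I$, collapsing the three self-types into a single one whose count then equals the full intersection number $\delta_I\cdot F_K=0$. Verifying that such a normal form can always be attained, and that the local model of the standard braid at $t$ indeed produces exactly the factorized, sign-coherent family of triple points used above, is the technical heart of the argument.
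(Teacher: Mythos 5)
Your localization and factorization steps are sound: by \cite{N4} the diagram of the standard braid has triple points only over triple points of $D$, and over a triple point of sign $\varepsilon$ with sheets in $F_I,F_J,F_K$ one gets $m^3$ triple points, all of sign $\varepsilon$, distributed among the components of $\widetilde S$ according to covering degrees, giving $\mathrm{Tlk}_{a,b,c}(\widetilde S)=d_a d_b d_c\, N_{I(a),I(b),I(c)}(D)$. Your Case A is also correct, and in fact covers more than you say: $N_{I,K,I}$ with $I\neq K$ is itself $\mathrm{Tlk}_{I,K,I}(S)$, since the definition of triple linking only requires top $\neq$ middle and middle $\neq$ bottom. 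So the proposal stands or falls with the adjacent-repeat types $N_{I,I,K}$, $N_{K,I,I}$ and $N_{I,I,I}$ --- and this is precisely the case the paper cannot do without, since its application is a degree-$2$ braid over the $2$-component link $S_k$ whose relevant triple linkings have their top two (or bottom two) components lying over the same component of $S_k$.

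Your treatment of that case has a genuine gap. A minor, repairable point first: the homological identity is not the unsigned sum but the alternating one, $N_{K,I,I}-N_{I,K,I}+N_{I,I,K}=0$ (the middle position enters with opposite sign; this is the same mechanism that yields $\mathrm{Tlk}_{i,j,k}=-\mathrm{Tlk}_{k,j,i}$), though this does not hurt your intended use. The serious problem is the normal form: no argument is given that $F_K$ can be isotoped so as to lie entirely below both sheets of $F_I$ along every self-intersection curve of $F_I$, and this is not a known or routine fact --- it is the entire content of the degenerate case. Note also that your Case B argument never invokes the hypothesis that $S$ has trivial triple linking; if it worked, it would prove that adjacent-repeat counts vanish for \emph{every} surface link and every diagram (by your own factorization, together with the well-definedness of the standard braid under Roseman moves, these counts are diagram-independent invariants whenever the standard braid has at least two components over $F_I$). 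That is a very strong universal assertion, and the paper's Lemma \ref{lemma1} indicates why homology of $\mathbb{R}^3$ alone cannot deliver it: for $2$-dimensional braids over a $2$-component link the analogous quantities equal $-kp_1+q_1$, i.e.\ they are controlled by the homology classes on $F_I$ of the braiding loops, which for the standard braid run along the double point curves of $D$; your normal form thus amounts to a homological triviality statement about that curve system which intersecting $\delta_I$ with $F_K$ does not address. The paper's own proof bypasses the whole case analysis: triple linking is a link bordism invariant, by \cite{CKSS01} a surface link with trivial triple linking is link bordant to a split union of trivial spheres and twisted Hopf 2-links, which admits a diagram with no triple points at all; the standard braid over such a diagram again has no triple points, and $\widetilde S$ is link bordant to that standard braid, so all of its triple linkings vanish simultaneously. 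Your route, if completed, would yield sharper diagrammatic formulas than the paper's, but as written its essential case is unproven.
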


\begin{proof}
Assume that $S$ has trivial triple linking. 
It is known \cite{CKSS01} that the link bordism class of a surface link is determined from triple linking numbers and another kind of link bordism invariants called double linking numbers, and a surface link with trivial triple linking is link bordant to a split union of a finite number of trivial spheres and surface links called twisted Hopf 2-links, which has a surface diagram with no triple points (see also Remark \ref{rem0211}). 
 Hence $S$ is link bordant to a surface link $S^\prime$ whose surface diagram has no triple points. By the well-definedness of Roseman moves, $\widetilde{S}$ is link bordant to the standard 2-dimensional braid $\widetilde{S^\prime}$ over $S^\prime$. Since the surface diagram of a standard 2-dimensional braid has triple points only around triple points of the companion surface \cite{N4}, the surface diagram of $\widetilde{S^\prime}$ has no triple points. Thus $\widetilde{S}$ is link bordant to a surface link with no triple points, which implies that $\widetilde{S}$ has trivial triple linking.
\end{proof}

\section{Proof of Theorem \ref{mainthm}}\label{sec:6}

In this section, we will consider a 2-dimensional braid $\widetilde{S}$ over a surface link $S$ presented by a $2$-chart consisting of a finite number of loops on a surface diagram of $S$. Here, a {\it loop} is a union of edges connected by vertices of degree $2$ as in Figure \ref{fig:0417-01}. 
In our case of $2$-charts, the edges are labeled by $1$ and the orientations are coherent around a vertex of degree $2$, so we can ignore the label information, and we regard the $2$-chart on a surface diagram of $S$ as oriented loops. Further, we consider that the loops are on $S$ itself. 
By the well-definedness of Roseman moves, a 2-dimensional braid presented by a $2$-chart $\Gamma$ on $S$ is equivalent to the 2-dimensional braid presented by a $2$-chart $f(\Gamma)$ on $f(S)$ for an orientation-preserving self-diffeomorphism $f$ of $\mathbb{R}^4$. 
 
For a component $F$ of a torus-covering $T^2$-link, we take a preferred basis of $H_1(F; \mathbb{Z})$ represented by a pair of simple closed curves $(\mu, \lambda)$ such that $\mu$ (respectively $\lambda$) is a connected component of $F \cap \pi^{-1}(\mathbf{m})$ (respectively $F \cap \pi^{-1}(\mathbf{l})$). Recall that $\pi: N(T) \to T$ is the natural projection for a standard torus $T$, and $\mathbf{m}$ and $\mathbf{l}$ are simple closed curves on $T$ given in Section \ref{sec:5-1}. We will use the same notation $(\mu, \lambda)$ for the preferred basis, and we call a simple closed curve in the homology class $\mu$ (respectively $\lambda$) a {\it meridian} (respectively a {\it preferred longitude}) of $F$. %Further, instead of saying that $\mu$ and $\lambda$ are a meridian and a preferred longitude of $F$, we will often say that $(\mu, \lambda)$ is a preferred basis of $H_1(F; \mathbb{Z})$. 
For a $2$-chart $\Gamma$ on $F$ consisting of loops, we can assume that 
the intersections of the chart loops of $\Gamma$ with a meridian $\mu$ and a preferred longitude $\lambda$ of $F$ are transverse. We assign each intersection point the sign $+1$ (respectively $-1$) when it presents a positive (respectively negative) crossing, and we denote by $I(\mu, \Gamma)$ (respectively $I(\lambda, \Gamma)$) the sum of the signs of the intersection points of $\Gamma$ with $\mu$ (respectively $\lambda$); note that we can assume that the chart loops are parallel by using local moves of charts called CI-moves of type (1) \cite{Kamada02}, and $I(\mu, \Gamma)$ and $I(\lambda, \Gamma)$) are well-defined for the homology classes $\mu$ and $\lambda$. 

%For a torus-covering $T^2$-link $S$ with two components $F_1$ and $F_2$ treated in this section, we consider a 2-dimensional braid $\widetilde{S}$, presented by a $2$-chart $\Gamma$ on $S$ consisting of loops such that $\widetilde{S}$ consists of 4 components. 
%The condition that $\widetilde{S}$ consists of 4 components implies that $S\cap p^{-1}(\mu)$ and $S\cap p^{-1}(\lambda)$ consist of closed pure braids, and it follows that $I(\mu, \Gamma)$ and $I(\lambda, \Gamma)$ are even, where $(\mu, \lambda)$ is a preferred basis of $H_1(F_i; \mathbb{Z})$ ($i=1,2$).  Further, 

For the torus-covering $T^2$-link $S$ and its 2-dimensional braid $\widetilde{S}$ treated in this section, we take the first (respectively second) component of $S$ as the one determined from the first (respectively second) strand of each basis braid of $S$, and we take the $i$th component of $\widetilde{S}$ as the one determined from the $i$th strand of each basis braid of $\widetilde{S}$ for $i=1,2,3,4$. 

For the proof of Theorem \ref{mainthm}, we calculate the triple linking numbers of a 2-dimensional braid of degree $2$ over $S_k$ in Theorem \ref{mainthm}.  

\begin{lemma}\label{lemma2}
For the torus-covering $T^2$-link $S_k$ for a positive integer $k$ in Theorem \ref{mainthm}, let us consider a 2-dimensional braid of degree $2$ over $S_k$, denoted by $\widetilde{S_k}$, which is presented by a $2$-chart $\Gamma$ consisting of loops on $S_k$ such that it consists of $4$ components. 
Then $\mathrm{Tlk}_{i,j,3}(\widetilde{S_k})=\mathrm{Tlk}_{i,j,4}(\widetilde{S_k})$ for $(i,j)=(1,2)$ or $(2,1)$, and 
$\mathrm{Tlk}_{i,j,1}(\widetilde{S_k})=\mathrm{Tlk}_{i,j,2}(\widetilde{S_k})$ for $(i,j)=(3,4)$ or $(4,3)$. 
\end{lemma}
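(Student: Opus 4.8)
The plan is to realize the two claimed equalities as instances of the invariance of the triple linking number under an orientation-preserving self-diffeomorphism of $\mathbb{R}^4$ that permutes the components of $\widetilde{S_k}$. Write $G_1,G_2,G_3,G_4$ for the components of $\widetilde{S_k}$. Since $\widetilde{S_k}$ is a degree-$2$ braid over $S_k=F_1\sqcup F_2$ with exactly four components, and by our indexing convention the $i$th component comes from the $i$th strand of each basis braid, the components $G_1,G_2$ are the two sheets lying over the first torus $F_1$ and $G_3,G_4$ are the two sheets lying over the second torus $F_2$. The triple linking number is a diffeomorphism invariant of an ordered surface link: if $\Phi$ is orientation-preserving, $\Phi(\widetilde{S_k})=\widetilde{S_k}$, and $\Phi$ induces the permutation $\rho$ on components, then $\Phi$ carries a surface diagram realizing $\mathrm{Tlk}_{i,j,k}$ to one of the same link with the signs of all triple points preserved and the top/middle/bottom sheets relabeled by $\rho$, so that $\mathrm{Tlk}_{i,j,k}(\widetilde{S_k})=\mathrm{Tlk}_{\rho(i),\rho(j),\rho(k)}(\widetilde{S_k})$. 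Hence it suffices to produce one such $\Phi$ with $\rho=(3\,4)$ fixing $G_1,G_2$ (which gives the first equality for both $(i,j)=(1,2)$ and $(2,1)$) and a companion $\Phi'$ with $\rho=(1\,2)$ fixing $G_3,G_4$ (which gives the second).

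I would build $\Phi$ from the deck involution of the degree-$2$ covering over $F_2$. Recall $\widetilde{S_k}$ lies in $N(S_k)=B^2\times S_k$, and over $F_2$ its two sheets $G_3,G_4$ meet each $B^2$-fiber in the two points of $Q_2$. Let $r=(\mathrm{rot}_\pi)\times\mathrm{id}_{F_2}$ be the fiberwise rotation of $B^2\times F_2$ by $\pi$; since the normal bundle is the trivial bundle $B^2\times F_2$, this is a globally well-defined orientation-preserving diffeomorphism, and it exchanges the two points of $Q_2$, hence exchanges $G_3$ and $G_4$. Define $\Phi$ to be $r$ over $N(F_2)$, the identity over $N(F_1)$, and the identity outside $N(S_k)$; because $F_1$ and $F_2$ are disjoint this patches to an orientation-preserving self-diffeomorphism of $\mathbb{R}^4$ fixing $G_1,G_2$ and swapping $G_3,G_4$. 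The point to verify is that $r$ preserves the chart-decorated braided surface over $F_2$: the chart $\Gamma$ carries only the label $1$, and the single generator $\sigma_1$ of the degree-$2$ braiding, together with the negative half-twists introduced in the definition of $\widetilde{b}$, is carried to itself by the $\pi$-rotation that interchanges the two strands, so each local piece of the surface over $F_2$ is preserved. Interchanging the roles of $F_1$ and $F_2$ produces $\Phi'$.

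I expect the main obstacle to be exactly this verification that the fiberwise $\pi$-rotation preserves $\widetilde{S_k}$ over $F_2$: one must track the negative half-twists in $\widetilde{b}$ and confirm compatibility of $r$ with the chart on the whole connected torus $F_2$, which arises from the several strands $2,\ldots,k+1$ of $X_k$ rather than from a single strand, so that $r$ respects the identification of sheets under $\pi|_{\widetilde{S_k}}$. Should a purely symmetric argument prove delicate, an equivalent route is to apply the formula \eqref{eq:tlk} to the basis braids $(a',b')$ of $\widetilde{S_k}$: the two parts of the lemma reduce to the linking-number equalities $\mathrm{lk}^{a'}_{i,3}=\mathrm{lk}^{a'}_{i,4}$ and $\mathrm{lk}^{b'}_{i,3}=\mathrm{lk}^{b'}_{i,4}$ for $i\in\{1,2\}$ (and the symmetric statements with indices $3,4$ and $1,2$ interchanged). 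These in turn follow from the same swap symmetry applied inside the solid torus containing the closure of $a'$ (respectively $b'$): $G_3$ and $G_4$ restrict there to parallel copies, and the twisting introduced between them by $\Gamma$ is internal to the pair and leaves their linking with $G_1$ and $G_2$ unchanged.
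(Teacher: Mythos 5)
Your fallback route in the last paragraph is exactly the paper's own proof. The paper notes that $\widetilde{S_k}$ is itself a torus-covering $T^2$-link, denotes its basis braids by $(a,b)$, asserts the linking equalities $\mathrm{lk}^{c}_{j,3}=\mathrm{lk}^{c}_{j,4}$ for $j=1,2$ and $\mathrm{lk}^{c}_{j,1}=\mathrm{lk}^{c}_{j,2}$ for $j=3,4$ $(c=a,b)$ --- your parallel-copies observation, with the correct point that crossings internal to the pair $\{G_3,G_4\}$ (or $\{G_1,G_2\}$) do not contribute to linking with the other components --- and concludes by (\ref{eq:tlk}). So that part is correct and is the intended argument.

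Your primary route, via a self-diffeomorphism $\Phi$ with $\Phi(\widetilde{S_k})=\widetilde{S_k}$ inducing the transposition $(3\,4)$ (resp.\ $(1\,2)$) on components together with the relabeling identity $\mathrm{Tlk}_{i,j,k}=\mathrm{Tlk}_{\rho(i),\rho(j),\rho(k)}$, is a genuinely different and workable idea, but its execution as written has one concrete error: the rigid fiberwise $\pi$-rotation $r$ of $N(F_2)=B^2\times F_2$ restricts to a $\pi$-rotation of $\partial B^2\times F_2$, so it does \emph{not} glue with the identity outside $N(S_k)$; your justification (``because $F_1$ and $F_2$ are disjoint'') addresses the seam between $N(F_1)$ and $N(F_2)$, which was never the issue, rather than the seam along $\partial N(F_2)$, where the map you define is discontinuous. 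The standard repair is to taper: use $(z,x)\mapsto\bigl(e^{i\theta(|z|)}z,\,x\bigr)$ with $\theta\equiv\pi$ on a subdisk of $B^2$ containing $Q_2$ and all of the braiding, and $\theta\equiv 0$ near $\partial B^2$. Since $\mathrm{Tlk}$ is an invariant of equivalence, you may first replace $\widetilde{S_k}$ by a fiberwise-isotopic representative in which $Q_2$ is symmetric about the center of the fiber and each $\sigma_1^{\pm 1}$ band along a chart loop is realized by the symmetric half-rotation model (possible precisely because a $2$-chart of loops carries only the label $1$); then the tapered twist preserves $G_3\cup G_4$, swaps the two sheets, and fixes everything else, and your argument goes through. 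Comparing the two approaches: the paper's computation is shorter because formula (\ref{eq:tlk}) is already in hand (and is needed again in Lemma \ref{lemma1}), while your symmetry argument avoids the formula entirely and yields slightly more (equality of all triple linking numbers related by the transposition), at the cost of the equivariance bookkeeping above.
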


\begin{proof}
The 2-dimensional braid $\widetilde{S_k}$ is also a torus-covering $T^2$-link. 
We denote by $(a, b)$ the basis braids presenting $\widetilde{S_k}$. 
Since $\mathrm{lk}_{j,3}^c=\mathrm{lk}_{j,4}^c$ for $j=2,1$, and 
 $\mathrm{lk}_{j,1}^c=\mathrm{lk}_{j,2}^c$ for $j=4,3$ $(c=a,b)$, by (\ref{eq:tlk}) we have the result. 
\end{proof}

\begin{lemma}\label{lemma1}
For the torus-covering $T^2$-link $S_k$, let us denote by $F_1$ (respectively $F_2$) the first (respectively second) component of $S_k$, and let $(\mu_i, \lambda_i)$ be a preferred basis of $H_1(F_i; \mathbb{Z})$ $(i=1,2)$. Let us consider a 2-dimensional braid $\widetilde{S_k}$ as in Lemma \ref{lemma2}, such that $I(\mu_i, \Gamma)=2p_i$ and  $I(\lambda_i, \Gamma)=2q_i$, for integers $p_i$ and $q_i$ $(i=1,2)$. 
Then 
$\mathrm{Tlk}_{1,2,3}(\widetilde{S_k})=-k p_1 +q_1$ and 
$\mathrm{Tlk}_{2,3,4}(\widetilde{S_k})=-p_2+q_2$. 
\end{lemma}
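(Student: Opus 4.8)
The plan is to realize $\widetilde{S_k}$ as a torus-covering $T^2$-link, as already observed at the start of the proof of Lemma \ref{lemma2}, with a pair of commutative $2(k+1)$-braids $(a,b)$ as its basis braids, and then to feed the relevant classical linking numbers into \eqref{eq:tlk}. First I would write $(a,b)$ explicitly. Starting from the standard $2$-dimensional braid, whose basis braids are the doublings $\widetilde{X_k}$ and $\widetilde{\Delta^2}$ of the basis braids of $S_k$, the chart $\Gamma$ inserts extra crossings into the bundle of two strands lying over each component $F_i$. Reading the $a$-braid along a meridian $\mathbf{m}$ and the $b$-braid along a preferred longitude $\mathbf{l}$, the intersections of $\Gamma$ with $\mu_i$ contribute the $I(\mu_i,\Gamma)=2p_i$ additional crossings to $a$, and those with $\lambda_i$ contribute the $I(\lambda_i,\Gamma)=2q_i$ additional crossings to $b$, all between the two strands over $F_i$. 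Consequently each self-linking number of the two components over a fixed $F_i$ acquires a term $p_i$ in $\mathrm{lk}^a$ (respectively $q_i$ in $\mathrm{lk}^b$), whereas the linking numbers between a component over $F_1$ and a component over $F_2$ are untouched by $\Gamma$ and coincide with their values for the standard $2$-dimensional braid.

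The key simplification is Proposition \ref{prop:tlk}: since $S_k$ has only two components it has trivial triple linking, so the standard $2$-dimensional braid over $S_k$ (the case $p_i=q_i=0$) has all triple linking numbers equal to $0$. Every linking number entering \eqref{eq:tlk} is affine in the chart data, each inter-component linking number being constant and each self-linking number depending linearly on a single $p_i$ or $q_i$; moreover, in each of the two triple linking numbers in question the chart-dependent self-linking number is always multiplied by a constant inter-component linking number. Hence the bilinear expression \eqref{eq:tlk} collapses to an affine function of the $p_i,q_i$ whose constant term is precisely the (vanishing) standard value, so $\mathrm{Tlk}_{1,2,3}(\widetilde{S_k})$ and $\mathrm{Tlk}_{2,3,4}(\widetilde{S_k})$ are \emph{linear} in the $p_i,q_i$. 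This is the decisive point, since it lets me avoid computing the framing contributions of the doubling altogether.

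It then remains only to evaluate a handful of constant inter-component linking numbers, which I would read off the closures of $a$ and $b$, using Lemma \ref{lemma2} to halve the number of cases. The expected values are $\mathrm{lk}^a_{2,3}=1$, coming from the single clasp $\sigma_1^2$ in $X_k$ (a strand over $F_1$ meets an $F_2$-component through the one crossing with the strand in position $2$), and $\mathrm{lk}^b_{2,3}=k$, coming from the full twist $\Delta^2$, in which a single strand over $F_1$ links each of the $k$ strands of a sheet over $F_2$ once; this is where the factor $k$ in $-kp_1+q_1$ originates. Substituting into \eqref{eq:tlk} gives $\mathrm{Tlk}_{1,2,3}=-kp_1+q_1$, while the analogous constants for the triple $(2,3,4)$ yield $\mathrm{Tlk}_{2,3,4}=-p_2+q_2$, the absence of a factor $k$ reflecting that here the distinguished connected component is a single circle linking the other sheet of $F_2$ only a bounded number of times.

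I expect the main obstacle to be the careful linking-number bookkeeping of this last step. I must determine the cycle structure of each closure, in particular that a component over $F_2$ meets $\pi^{-1}(\mathbf{m})$ in one circle of degree $k$ but meets $\pi^{-1}(\mathbf{l})$ in $k$ circles of degree one, which is exactly what makes $\mathrm{lk}^a$ and $\mathrm{lk}^b$ non-symmetric and produces the factor $k$ on one side but not on the other; and I must fix all orientation and crossing-sign conventions so that the signs contributed by the clasp, by the full twist, and by the chart are mutually consistent. Pinning down these signs and correctly identifying the distinguished connected components $A_i^1$ and their $b$-analogues $B_i^1$ is the delicate part; once that is done, the conclusion follows by direct substitution into \eqref{eq:tlk}.
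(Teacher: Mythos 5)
Your skeleton is essentially the paper's: realize $\widetilde{S_k}$ as a torus-covering $T^2$-link with basis braids $(a,b)$, observe that the chart only shifts the self-linking numbers of the two sheets over each $F_i$ (by $p_i$ in $a$, by $q_i$ in $b$) while leaving all inter-component linking numbers at their standard values, then substitute into \eqref{eq:tlk} using $\mathrm{lk}^a_{2,3}=\mathrm{lk}^{X_k}_{1,2}=1$ and $\mathrm{lk}^b_{2,3}=\mathrm{lk}^{\Delta^2}_{1,2}=k$. Where you genuinely depart from the paper is the handling of the constant terms: the paper computes the framing corrections of the doubling explicitly (its half-twist convention gives $\mathrm{lk}_{2,1}^{\widetilde{X_k}}=-\mathrm{lk}_{1,2}^{X_k}$ and $\mathrm{lk}_{2,1}^{\widetilde{\Delta^2}}=-\mathrm{lk}_{1,2}^{\Delta^2}$) and then watches the cross terms cancel, whereas you invoke Proposition \ref{prop:tlk} to conclude that the constant term --- the value of \eqref{eq:tlk} on the standard $2$-dimensional braid --- vanishes, since $S_k$ has fewer than three components. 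That use of Proposition \ref{prop:tlk} is legitimate (the standard degree-$2$ braid over $S_k$ does have four components, so the proposition really does force all its triple linking numbers to be zero), and it buys exactly what you intended: no bookkeeping of the doubling conventions.

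However, the step you defer --- \textquotedblleft the analogous constants for the triple $(2,3,4)$\textquotedblright\ --- is precisely where a genuine sign subtlety sits, and your assertion does not establish it. The two computations are \emph{not} analogous in a sign-preserving way: in $\mathrm{Tlk}_{1,2,3}$ the chart-dependent pair occupies the $(j,i)=(2,1)$ slots of \eqref{eq:tlk}, so $p_1$ appears with coefficient $-\mathrm{lk}^b_{2,3}=-k$ and $q_1$ with $+\mathrm{lk}^a_{2,3}=+1$, giving $-kp_1+q_1$; but in $\mathrm{Tlk}_{2,3,4}$ the chart-dependent pair occupies the opposite slots $(j,k)=(3,4)$, so $p_2$ appears with coefficient $+\mathrm{lk}^b_{3,2}=+1$ and $q_2$ with $-\mathrm{lk}^a_{3,2}=-1$. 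Carried out carefully, your own method therefore yields $\mathrm{Tlk}_{2,3,4}(\widetilde{S_k})=p_2-q_2$, the negative of what you claim. (The paper's proof says \textquotedblleft by the same argument\textquotedblright\ and records $-p_2\,\mathrm{lk}^{\Delta^2}_{2,1}+q_2\,\mathrm{lk}^{X_k}_{2,1}$, which a direct application of \eqref{eq:tlk} as printed does not give; the discrepancy is harmless for Theorem \ref{mainthm}, whose proof only uses equalities of triple linking numbers of $\widetilde{S_k}$ and $\widetilde{S_l}$, and these survive a simultaneous sign flip.) Relatedly, your explanation for the absence of a factor $k$ is misdirected: for $(2,3,4)$ the constant inter-component linkings are $\mathrm{lk}_{3,2}$, the linking of one circle over $F_2$ with the sheet over $F_1$ (equal to $1$ in both $a$ and $b$, since in the full twist each single strand links the $F_1$ strand once), while the linking between the two sheets over $F_2$ is the chart-dependent quantity, not a constant. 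In short: your strategy is sound, and at the constant-term step cleaner than the paper's, but the sign bookkeeping you yourself flagged as the delicate part is left undone, and doing it changes the answer you wrote down.
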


Note that $\widetilde{S_k}$ consists of 4 components if and only if $I(\mu_i, \Gamma)$ and $I(\lambda_i, \Gamma)$ $(i=1,2)$ are even, since these conditions are equivalent to the condition that $\widetilde{S_k}\cap \pi_i^{-1}(\mu)$ and $\widetilde{S_k} \cap \pi_i^{-1}(\lambda)$ $(i=1,2)$ are closed pure braids, where $\pi_i: N(F_i) \to F_i$ is the natural projection. 

\begin{proof}
The 2-dimensional braid $\widetilde{S_k}$ is also a torus-covering $T^2$-link. 
We denote by $(a, b)$ the basis braids presenting $\widetilde{S_k}$. 
We use the notation given in Section \ref{sec:2-2}, taking $m=2$ and $n=k+1$. 
Then, $\mathrm{lk}_{2,1}^a$ is determined from the linking number coming from the linking consisting of $I(\mu_1, \Gamma)$ crossings and $\widetilde{X_k}$, that is, 
$\mathrm{lk}_{2,1}^a=p_1+\mathrm{lk}_{2,1}^{\widetilde{X_k}}$,  and similarly, 
$\mathrm{lk}_{2,1}^b=q_1+\mathrm{lk}_{2,1}^{\widetilde{\Delta^2}}$. 
By definition, for a braid $c$, the braid $\widetilde{c}$ has a negative (respectively positive) half twist at the place which is a fiber of a point of each arc forming a positive (respectively negative) crossing of $c$; hence, $\mathrm{lk}_{2,1}^{\widetilde{X_k}}=-\mathrm{lk}_{1,2}^{X_k}$ and  
$\mathrm{lk}_{2,1}^{\widetilde{\Delta^2}}=-\mathrm{lk}_{1,2}^{\Delta^2}$. 
thus 
$\mathrm{lk}_{2,1}^a=p_1-\mathrm{lk}_{1,2}^{X_k}$ and 
$\mathrm{lk}_{2,1}^b=q_1-\mathrm{lk}_{1,2}^{\Delta^2}$. 

Further, $\mathrm{lk}_{2,3}^{a}=\mathrm{lk}_{1,2}^{X_k}$ and $\mathrm{lk}_{2,3}^b=\mathrm{lk}_{1,2}^{\Delta^2}$. 
Thus $\mathrm{Tlk}_{1,2,3}(\widetilde{S_k})=-p_1\, \mathrm{lk}_{1,2}^{\Delta^2}+q_1\, \mathrm{lk}_{1,2}^{X_k}$ by (\ref{eq:tlk}). 
Since $\mathrm{lk}_{1,2}^{X_k}$ is the linking number of the closure of $X_k$, $\mathrm{lk}_{1,2}^{X_k}=1$. 
Since $F_1$ (respectively $F_2$) is constructed by one strand (respectively $k$ strands) of $\Delta^2$, $\mathrm{lk}_{1,2}^{\Delta^2}=k$. 
Thus $\mathrm{Tlk}_{1,2,3}(\widetilde{S_k})=-k p_1 +q_1$. 

By the same argument, we have 
$\mathrm{Tl}k_{2,3,4}(\widetilde{S_k})=-p_2\, \mathrm{lk}_{2,1}^{\Delta^2}+q_2 \,\mathrm{lk}_{2,1}^{X_k}$ by (\ref{eq:tlk}), and $\mathrm{lk}_{2,1}^{X_k}=1$. 
Since $\Delta^2$ is a pure braid, we see that $\mathrm{lk}_{2,1}^{\Delta^2}=1$. 
Thus $\mathrm{Tlk}_{2,3,4}(\widetilde{S_k})=-p_2+q_2$. 
\end{proof}

\begin{proof}[Proof of Theorem \ref{mainthm}]
\begin{sloppypar}
Let $k$ and $l$ be positive integers. 
We denote by $F_1$ (respectively $F_2$) the first (respectively second) component of $S_k$, and 
we denote by $F_1^\prime$ (respectively $F_2^\prime$) the first (respectively second) component of $S_l$. 

First we show that for $k\neq l$, there does not exist an orientation-preserving self-diffeomorphism of $\mathbb{R}^4$ carrying $F_1$ to $F_1^\prime$ and $F_2$ to $F_2^\prime$. Assume that there is such a diffeomorphism $f$. 
Let us consider a 2-dimensional braid over $S_k$, denoted by $\widetilde{S_k}^1$, which is presented by a $2$-chart $\Gamma$ on $S_k$ such that $\Gamma \cap F_1$ consists of loops with $I(\mu_1, \Gamma)=2p$ and $I(\lambda_1, \Gamma)=2q$ and $\Gamma \cap F_2=\emptyset$,  where $(\mu_1, \lambda_1)$ is a preferred basis of $H_1(F_1; \mathbb{Z})$. Note that $\widetilde{S_k}^1$ consists of 4 components.    
 
Since $f$ is an orientation-preserving diffeomorphism which carries $F_1$ to $F_1'$, $f |_{F_1}$ is an orientation-preserving diffeomorphism from a torus $F_1$ to a torus $F_1^\prime$. Let $A=\begin{pmatrix} \alpha & \beta \\ \gamma & \delta \end{pmatrix} \in \mathrm{GL}_+(2,  \mathbb{Z})$ be a matrix determined by  
\begin{equation}\label{0323-1}
\begin{pmatrix} \mu_1^\prime \\ \lambda_1^\prime \end{pmatrix}=A\begin{pmatrix} f_*(\mu_1) \\ f_*(\lambda_1) \end{pmatrix},
\end{equation}
where $(\mu_1^\prime, \lambda_1^\prime)$ is a preferred basis of $H_1(F_1'; \mathbb{Z})$. 

Put $\Gamma'=f(\Gamma)$. By $f$, $\widetilde{S_k}^1$ is taken to a 2-dimensional braid over $S_l$, presented by a 2-chart $\Gamma'$ on $S_l$ such that $\Gamma' \cap F_1'$ consists of loops and $\Gamma' \cap F_2'=\emptyset$, which will be denoted by $\widetilde{S_l}^1$. 
We see that $I(f_*(\mu_1), \Gamma')=I(\mu_1, \Gamma)=2p$, and  $I(f_*(\lambda_1), \Gamma')=I(\lambda_1, \Gamma)=2q$. 
Put $p'=I(\mu_1', \Gamma')/2$ and $q'=I(\lambda_1', \Gamma')/2$; note that $p'$ and $q'$ are integers, since $\widetilde{S_l}^1$ consists of $4$ components. 
It follows from (\ref{0323-1}) that 
\begin{equation}\label{0328-1}
\begin{pmatrix} p^\prime \\ q^\prime \end{pmatrix}=A\begin{pmatrix} p \\ q \end{pmatrix}.
\end{equation}
Since the triple linking numbers $\mathrm{Tlk}_{1,2,3}$ for $\widetilde{S_k}^1$ and $\widetilde{S_l}^1$ are the same, Lemma \ref{lemma1} implies that 
\begin{equation}\label{0325-1}
-kp+q=-lp^\prime +q^\prime, 
\end{equation}
hence, it follows from (\ref{0328-1}) that
$
k p-q=(\alpha l-\gamma) p+(\beta l-\delta) q. 
$
Since this equation holds true for any integers $p$ and $q$, 
 \begin{equation}\label{eq:0303-01}
\begin{pmatrix}
k \\ -1
\end{pmatrix}= A^T \begin{pmatrix} l \\ -1 \end{pmatrix},
\end{equation}
where $A^T$ is the transposed matrix of $A$. 

Next we will consider another 2-dimensional braid over $S_k$, denoted by $\widetilde{S_k}^2$, presented by a $2$-chart $\widetilde{\Gamma}$ on $S_k$ such that $\widetilde{\Gamma} \cap F_1=\emptyset$ and $\widetilde{\Gamma} \cap F_2$ consists of loops on $F_2$ and moreover $\widetilde{\Gamma} \cap F_2$ is the preimage by the projection $N(T) \to T$ of a 2-chart $\Gamma$ on the standard torus $T$ consisting of loops with $I(\mathbf{m}, \Gamma)=2p$ and $I(\mathbf{l}, \Gamma)=2q$, where $(\mathbf{m}, \mathbf{l})$ is a preferred basis of $T$. 
Note that $I(\mu_2, \widetilde{\Gamma})=2kp$ and $I(\lambda_2, \widetilde{\Gamma})=2q$, where $(\mu_2, \lambda_2)$ is a preferred basis of $H_1(F_2; \mathbb{Z})$. 

Let $g$ be an orientation-preserving diffeomorphism of $\mathbb{R}^4$ which carries $F_2$ sufficiently close to $F_1$ and $(g|_{F_i})_*=\mathrm{id}_*: H_1(F_i; \mathbb{Z}) \to g_*(H_1(F_i); \mathbb{Z})$ ($i=1,2$). 
Further, we assume that $T$ is sufficiently close to $F_1$. Then $\begin{pmatrix}\mathbf{m}'\\ \mathbf{l}'\end{pmatrix}=A\begin{pmatrix}(f\circ g)_*(\mathbf{m})\\ (f\circ g)_*(\mathbf{l})\end{pmatrix}$, where $(\mathbf{m}', \mathbf{l}')$ is a preferred basis of $T'=(f\circ g)(T)$. 
Put $\Gamma'=(f\circ g) (\Gamma)$. Then we have 
\begin{equation}\label{0323-3}
\begin{pmatrix}I(\mathbf{m}', \Gamma')\\ I(\mathbf{l}', \Gamma')\end{pmatrix}=A\begin{pmatrix}I(\mathbf{m}, \Gamma)\\ I(\mathbf{l}, \Gamma)\end{pmatrix}.
\end{equation}

Put $S'=(f\circ g)(S_k)$. 
The surface link $S'$ is in the form of a 2-dimensional braid over $T'$ of degree $k+1$. 
For the natural projection $\pi': N(T')=(f \circ g)(N(T)) \to T'$ and a meridian $\mathbf{m}'$ and a preferred longitude $\mathbf{l}'$ of $T'$, let us consider $S' \cap \pi'^{-1}(\mathbf{m}')$ and $S' \cap \pi'^{-1}(\mathbf{l}')$, which are closed $(k+1)$-braids in the 3-dimensional solid tori. In the same way of obtaining basis braids, we obtain $(k+1)$-braids from the closed braids by cutting open the solid tori along the 2-disk $\pi'^{-1}(x_0')$, where $x_0'$ is the intersection point of $\mathbf{m}'$ and $\mathbf{l}'$. We denote the braids by $a$ and $b$. Note that here $T'$ is a standard torus, and hence $(a,b)$ are basis braids, but we can apply the same argument if $T'$ is not a standard torus. 
Since $S'$ consists of two components, $a$ and $b$ satisfy one of the three cases as follows.

\begin{enumerate}
\item[(Case 1)] 
The closure of $a$ is a link consisting of two components, and $b$ is a pure braid. 

\item[(Case 2)]
 Each of the closures of $a$ and $b$ is a link consisting of two components. 

\item[(Case 3)]
 The braid $a$ is a pure braid, and the closure of $b$ is a link consisting of two components. 
\end{enumerate}

Put $\widetilde{\Gamma}'= (f \circ g)(\widetilde{\Gamma})$. By $f\circ g$, $\widetilde{S_k}^2$ is taken to a 2-dimensional braid presented by a 2-chart $\widetilde{\Gamma}'$ on $S'$, which will be denoted by $\widetilde{S'}$. 
We denote by $F'$ the component $(f \circ g)(F_2)$ of $S'$, and we denote by $(\mu', \lambda')$ a preferred basis of $H_1(F'; \mathbb{Z})$. 
Since $\widetilde{\Gamma} \cap F_2$  is in the form of the preimage by $N(T) \to T$ of the $2$-chart $\Gamma$ on $T$, $\widetilde{\Gamma}' \cap F'$ is in the form of the preimage by $N(T') \to T'$ of the $2$-chart $\Gamma'$ on $T'$, and hence 
$I(\mu', \widetilde{\Gamma}')=i \cdot I(\mathbf{m}', \Gamma')$ and $I(\lambda', \widetilde{\Gamma}')=j \cdot I(\mathbf{l}', \Gamma')$ for $(i,j)=(k,1)$ for Case 1, $(k,k)$ for Case 2, and $(1,k)$ for Case 3. 
Thus 
\begin{equation}\label{0323-4}
\begin{pmatrix} I(\mu', \widetilde{\Gamma}') \\ I(\lambda', \widetilde{\Gamma}')\end{pmatrix}=B\begin{pmatrix} I(\mathbf{m}^\prime, \Gamma') \\ I(\mathbf{l}^\prime, \Gamma')\end{pmatrix},
\end{equation}
 where $B$ is a diagonal matrix $\mathrm{diag} (i,j)$ such that $(i,j)=(k,1)$ for Case 1, $(k,k)$ for Case 2, and $(1,k)$ for Case 3. 

Put $h=f \circ (f\circ g)^{-1}$. Then $h$ is an orientation-preserving self-diffeomorphism of $\mathbb{R}^4$ which carries $S'$ to $S_l$. In particular, $h$ carries $F'$ to the second component $F_2^\prime$ of $S_l$. 
Let $C=\begin{pmatrix} \alpha^\prime & \beta^\prime \\
\gamma^\prime & \delta^\prime \end{pmatrix} \in \mathrm{GL}_+(2, \mathbb{Z})$ be a matrix determined by  $\begin{pmatrix} \mu_2^\prime \\ \lambda_2^\prime \end{pmatrix}=C \begin{pmatrix} h_*(\mu') \\ h_*(\lambda') \end{pmatrix}$, where $(\mu_2^\prime, \lambda_2^\prime)$ is a preferred basis of $H_1(F_2^\prime; \mathbb{Z})$. Put $\Gamma''=h(\widetilde{\Gamma}')$. Then 
\begin{equation}\label{0323-5}
\begin{pmatrix} I(\mu_2', \Gamma'') \\ I(\lambda_2', \Gamma'') \end{pmatrix}=C\begin{pmatrix} I(\mu', \widetilde{\Gamma}') \\ I(\lambda', \widetilde{\Gamma}')\end{pmatrix}.
\end{equation}

Put $p''=I(\mu_2', \Gamma'')/2$ and $q''=I(\lambda_2', \Gamma'')/2$, which are integers. Since $I(\mathbf{m}, \Gamma)=2p$ and $I(\mathbf{l}, \Gamma)=2q$, 
together with (\ref{0323-3})--(\ref{0323-5}), we have 
\begin{equation}\label{0323-6}
\begin{pmatrix}p''\\ q''\end{pmatrix}=(CBA)\begin{pmatrix} p\\q \end{pmatrix}.
\end{equation}

By the composite diffeomorphism $h \circ f \circ g=f$, $\widetilde{S_k}^2$ is taken to a 2-dimensional braid over $S_l$, which will be denoted by $\widetilde{S_l}^2$. 
Since $\mathrm{Tlk}_{2,3,4}$ are the same for $\widetilde{S_k}^2$ and $\widetilde{S_l}^2$, together with $I(\mu_2, \widetilde{\Gamma})=2kp$ and $I(\lambda_2, \widetilde{\Gamma})=2q$, Lemma \ref{lemma1} implies that  
\begin{equation}\label{0325-2}
-kp+q=-p''+q''. 
\end{equation}
Since this equation holds true for any integers $p$ and $q$, it follows from (\ref{0323-6}) that  
$\begin{pmatrix} k \\ -1 \end{pmatrix}=(CBA)^T \begin{pmatrix}1 \\ -1 \end{pmatrix}$. 
Thus, together with (\ref{eq:0303-01}), $B^T C^T\begin{pmatrix}1 \\ -1 \end{pmatrix}=\begin{pmatrix} l \\ -1 \end{pmatrix}$, hence  
$i(\alpha^\prime-\gamma^\prime)=l$ and $j (\beta^\prime-\delta^\prime)=-1$. 
Let us assume $k>l>0$. For Cases 1 and 2, $k(\alpha^\prime-\gamma^\prime)=l$ from the first equation. This contradicts the assumption that $k>l>0$. For Case 3, the second equation implies that $k(\delta^\prime-\beta^\prime)=1$, which contradicts the assumption that $k>1$. Thus, for $k\neq l$, there does not exist an orientation-preserving self-diffeomorphism of $R^4$ which carries $F_1$ to $F_1'$ and $F_2$ to $F_2'$. 

Next we show that for $k \neq l$, there does not exist an orientation-preserving self-diffeomorphism of $R^4$ which carries $F_1$ to $F_2'$ and $F_2$ to $F_1'$. We will discuss a similar argument as in the former case 
of a diffeomorphism which carries $F_1$ to $F_1'$ and $F_2$ to $F_2'$, using the same notation except where we give notice. 

Assume that there is such a diffeomorphism $f$, and consider $\Gamma$ as in the former case. Then, since $\mathrm{Tlk}_{1,2,3}$ for $\widetilde{S_k}^1$ and $\mathrm{Tlk}_{3,4,1}=\mathrm{Tlk}_{4,3,2}$ (see Lemma \ref{lemma2}) for $\widetilde{S_l}^1$ are the same, and since $\mathrm{Tlk}_{4,3,2}=-\mathrm{Tlk}_{2,3,4}$ \cite{CJKLS}, Lemma \ref{lemma1} implies that instead of (\ref{0325-1}) 
we have
\begin{equation}\label{6-10}
-kp+q=p'-q', 
\end{equation}
where $p'=I(\mu_2', \Gamma')/2$ and $q'=I(\lambda_2', \Gamma')/2$, 
and hence instead of (\ref{eq:0303-01}) we have
\begin{equation}
\begin{pmatrix}\label{0327-1}
k\\-1
\end{pmatrix}
=A^T\begin{pmatrix}
-1\\1
\end{pmatrix}. 
\end{equation}

 Next we will consider another 2-dimensional braid over $S_k$, denoted by $\widetilde{S_k}^2$, presented by the $2$-chart $\widetilde{\Gamma}$ as in the former case.  
Then, by the same argument as in the former case, we have 
(\ref{0323-6}), where $p''=I(\mu_1', \Gamma'')/2$ and $q''=I(\lambda_1', \Gamma'')/2$.

By the composite diffeomorphism $h \circ f \circ g$, $\widetilde{S_k}^2$ is carried to a 2-dimensional braid over $S_l$, which will be denoted by $\widetilde{S_l}^2$. 
Since $\mathrm{Tlk}_{2,3,4}$ for $\widetilde{S_k}^2$ and $\mathrm{Tlk}_{3,1,2}=\mathrm{Tlk}_{3,2,1}$ (see Lemma \ref{lemma2}) for $\widetilde{S_l}^2$ are the same, and since $\mathrm{Tlk}_{3,2,1}=-\mathrm{Tlk}_{1,2,3}$ \cite{CJKLS}, together with $I(\mu_2, \widetilde{\Gamma})=2kp$ and $I(\lambda_2, \widetilde{\Gamma})=2q$, Lemma \ref{lemma1} implies that  
\begin{equation}\label{0325-2}
-kp+q=lp''-q''. 
\end{equation}
Since this equation holds true for any integers $p$ and $q$, it follows from (\ref{0323-6}) that   
$\begin{pmatrix} k \\ -1 \end{pmatrix}=(CBA)^T \begin{pmatrix}-l \\ 1 \end{pmatrix}$. 
Thus, together with (\ref{0327-1}), $B^T C^T\begin{pmatrix}-l \\ 1 \end{pmatrix}=\begin{pmatrix} -1 \\ 1 \end{pmatrix}$, hence  
$i(-l\alpha^\prime+\gamma^\prime)=-1$ and $j (-l\beta^\prime+\delta^\prime)=1$. 
Let us assume $k>l>0$. Since at least one of $i$ and $j$ is $k$ for Cases 1, 2, and 3, these equations contradict the assumption that $k>1$. Thus, for $k \neq l$, there does not exist an orientation-preserving self-diffeomorphism of $R^4$ carries $F_1$ to $F_2'$ and $F_2$ to $F_1'$. Thus $S_k$ and $S_l$ are not equivalent for positive integers $k \neq l$. 
\end{sloppypar}
\end{proof}
 
\section*{Acknowledgments}
The author would like to thank Professors Seiichi Kamada, Shin Satoh, Hiroki Kodama, Takuya Sakasai and the referees for their helpful comments. 
The author was supported by iBMath through the fund for Platform for Dynamic Approaches to Living System from MEXT.

\end{document}